\newcommand{\sph}{\mathbb{S}}
\newcommand{\E}{\mathbb{E}}
\theoremstyle{definition} 
\newtheorem{rem}{Remark}
\theoremstyle{plain}
\newtheorem{thm}{Theorem}
\newtheorem{cor}{Corollary} 
\newtheorem{conj}{Conjecture}
\begin{document}

\title{Topological Hochschild homology of $X(n)$}
\author{Jonathan Beardsley}
\date{July 7, 2015}
\maketitle 

\begin{abstract}
We show that Ravenel's spectrum $X(2)$ is the versal $\E_1$-$\sph$-algebra of characteristic $\eta$. This implies that every $\E_1$-$\sph$-algebra $R$ of characteristic $\eta$ admits an $\E_1$-ring map $X(2)\to R$, i.e. an $\mathbb{A}_\infty$ complex orientation of degree 2. This implies that $R^\ast(\mathbb{C}P^2)\cong R_\ast[x]/x^3$. Additionally, if $R$ is an $\mathbb{E}_2$-ring \emph{Thom} spectrum admitting a map (of homotopy ring spectra) from $X(2)$, e.g. $X(n)$, its topological Hochschild homology has a simple description.  
\end{abstract}

\begin{thm}
The spectrum $X(n)$, which is the Thom spectrum of the inclusion $\Omega SU(n)\hookrightarrow \Omega SU\simeq BU\to BGL_1(\sph)$, is of characteristic $\eta$ in the sense of \cite{szymikprimechar} and \cite{acb}. In particular $X(2)$ is the versal $\E_1$-$\sph$-algebra of characteristic $\eta$ (as described in Definition 4.3 of \cite{acb}). 
\end{thm}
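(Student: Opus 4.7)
My plan is to identify $X(2)$ directly with the Thom-spectrum presentation of $\sph//\eta$ provided by \cite{acb}, and then obtain the statement for $X(n)$ with $n\geq 2$ by pushing the characteristic-$\eta$ structure along the canonical $\E_1$-ring map $X(2)\to X(n)$ induced by $SU(2)\hookrightarrow SU(n)$.

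First, I would recall from \cite{acb} that the versal $\E_1$-$\sph$-algebra of characteristic $\eta$ admits a Thom-spectrum model: namely, it is $\mathrm{Thom}(f)$, where $f\colon J(S^2)\to BGL_1(\sph)$ is the (essentially unique) $\E_1$-extension of a pointed map $S^2\to BGL_1(\sph)$ classifying $\eta\in\pi_1(\sph)\cong\pi_2 BGL_1(\sph)$, and $J(S^2)=\Omega\Sigma S^2\simeq\Omega S^3$ is the James construction, i.e., the free $\E_1$-space on $S^2$. Both existence and uniqueness of the extension follow from the universal property of the free $\E_1$-space combined with the symmetric monoidality of the Thom spectrum functor from $\E_1$-spaces over $BGL_1(\sph)$ to $\E_1$-$\sph$-algebras.

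Next, using James's equivalence $\Omega SU(2)\simeq\Omega S^3\simeq J(S^2)$, the structure map defining $X(2)$ becomes an $\E_1$-map $J(S^2)\to BGL_1(\sph)$, and by the universal property it suffices to check that it agrees with the $\E_1$-extension of $\eta$ after restriction to the bottom cell $S^2$. The inclusion $S^2\hookrightarrow\Omega SU(2)$ represents a generator of $\pi_2\Omega SU(2)=\pi_3 SU(2)\cong\ints$; this maps to a generator of $\pi_2 BU\cong\pi_1 U\cong\ints$ under $\Omega SU(2)\hookrightarrow\Omega SU\simeq BU$, and the complex $J$-homomorphism sends that generator to $\eta\in\pi_1\sph$, which is the classical computation $J(1)=\eta$.

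Having identified $X(2)$ with the versal characteristic-$\eta$ algebra, the statement for $X(n)$ with $n\geq 2$ follows formally: the inclusion $SU(2)\hookrightarrow SU(n)$ is compatible with the maps to $SU$, so it loops to an $\E_1$-map $\Omega SU(2)\to\Omega SU(n)$ over $BGL_1(\sph)$, which Thomifies to an $\E_1$-ring map $X(2)\to X(n)$ and transfers the characteristic-$\eta$ structure. The principal obstacle I foresee is formalising the universal-property argument at the appropriate $\infty$-categorical level: one must arrange that the Thom spectrum functor preserves the free/pushout constructions used to define $\sph//\eta$, so that ``agreement on the bottom cell'' is genuinely decisive, and one must verify that $\Omega SU(2)\hookrightarrow BU$ promotes to an $\E_1$-map over $BGL_1(\sph)$ rather than only a loop map. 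By contrast, the classical inputs ($J(1)=\eta$, James's theorem, and the isomorphism $\pi_3 SU(2)\xrightarrow{\sim}\pi_3 SU$) are routine.
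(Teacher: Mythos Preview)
Your proposal is correct and follows essentially the same strategy as the paper's proof: reduce via the free-$\E_1$ universal property of $\Omega\Sigma S^2$ to the bottom cell, identify the resulting class in $\pi_2 BGL_1(\sph)$ as $\eta$, invoke \cite{acb} to conclude $X(2)\simeq\sph/\!\!/_{\E_1}\eta$, and then propagate to $X(n)$ along the $\E_1$-ring map $X(2)\to X(n)$. The only differences are cosmetic: the paper pins down $\eta$ by a parity argument (since $\pi_1 GL_1(\sph)\cong\ints/2$ and the Thom spectrum is visibly not $\Sigma^\infty_+\Omega S^3$) rather than by your direct $J$-homomorphism computation $J(1)=\eta$, and your worry about promoting the structure map to an $\E_1$-map is dispatched by observing, as the paper does, that $\Omega SU(2)\to BGL_1(\sph)$ is in fact a two-fold loop map.
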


\begin{proof}
We use \cite{acb} in a crucial way. Recall that $X(2)$ is the Thom spectrum of the inclusion $i:\Omega S^3\simeq \Omega SU(2)\hookrightarrow BU\to BGL_1(\sph)$. Note that this morphism is a two fold loop map, and as such a morphism of $\E_2$-algebras in $Top$. Let $\tilde{i}$ be the induced $\E_1$-morphism.   We have the following equivalences of mapping spaces: $$Map_{\E_1-alg}(\Omega\Sigma S^2,BGL_1(\sph))\simeq Map_{Top}(S^2,BGL_1(\sph)\simeq Map_{Top}(S^1,GL_1(\sph)).$$ Since $\pi_1(GL_1(\sph))\cong \pi_1(\sph)\cong \mathbb{Z}/2$ we have that this map is either null homotopic or non-trivial and unique up to homotopy. Since it is not null (i.e.~the associated Thom spectrum is not the suspension spectrum of $\Omega S^3$), $\tilde{i}_\ast\colon \pi_\ast(S^1)\to \pi_\ast(GL_1(\sph))$ takes $1\in\pi_1(S^1)$ to $\eta$, the generator of $\pi_1(\sph)\cong \pi_1(GL_1(\sph))$. Indeed, the preceding sequence of equivalences implies, by Theorem 4.10 of \cite{acb}, that $X(2)\simeq \sph/\!\!/_{\E_1}\eta$, the versal $\E_1$-algebra over $\sph$ of characteristic $\eta$.  

Moreover, as $X(n)$ admits a morphism of $\E_1$-ring spectra (actually of $\E_2$-ring spectra) $X(2)\to X(n)$ for every $n$, we have that the $X(n)$ must be an $\E_1$-$\sph$-algebra of characteristic $\eta$. In particular, the composition $\Sigma\sph\overset{\eta}\to\sph\to X(n)$ is nullhomotopic. 
\end{proof}

\begin{rem}
Of course it's not necessary to use the machinery of characteristics of structured ring spectra to notice that $\eta$ is trivial in $X(n)_\ast$, but the identification of $X(2)$ as the versal $\E_1$-$\sph$-algebra of characteristic $\eta$ seems interesting in its own right. 
\end{rem}

\begin{cor}
If $R$ is an $\E_1$-ring spectrum of characteristic $\eta$ then $Map_{\E_1}(X(2),R)\simeq \Omega^{\infty+2} R$ and $R^\ast(\mathbb{C}P^2)\simeq R_\ast[x]/x^3$. 
\end{cor}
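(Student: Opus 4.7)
The plan splits in two. For the mapping space claim, I would use the previous theorem identifying $X(2)$ with the versal $\E_1$-$\sph$-algebra of characteristic $\eta$: by Theorem 4.10 of \cite{acb}, $Map_{\E_1}(X(2),R)$ is the space of null-homotopies of $\eta\cdot 1_R\colon \Sigma\sph\to R$. Since $R$ has characteristic $\eta$ this composite is null, so upon choosing a basepoint the space of null-homotopies becomes $Map_{Sp}(\Sigma^2\sph,R)\simeq\Omega^{\infty+2}R$.

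For the $\mathbb{C}P^2$-cohomology claim, I would first derive the additive structure by smashing the cofiber sequence $S^3\xrightarrow{\eta} S^2\to \mathbb{C}P^2\to S^4$ with $R$. Because $\eta\cdot 1_R=0$ the sequence splits, giving $R\wedge\mathbb{C}P^2\simeq \Sigma^2 R\vee \Sigma^4 R$ and hence $\tilde R^*(\mathbb{C}P^2)\cong R^{*-2}\oplus R^{*-4}$ as $R_*$-modules. I would then choose $x\in\tilde R^2(\mathbb{C}P^2)$ restricting to the unit in $\tilde R^2(S^2)\cong R_0$; such a class exists by the splitting, and concretely it can be obtained by restricting any $\E_1$-map $X(2)\to R$ (which exists by the first part) to the $2$-skeleton $C\eta\simeq\Sigma^{-2}\mathbb{C}P^2$ of $X(2)$.

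The core step is to identify $x^2$ as a generator of the $R^{*-4}$ summand. Since $\mathbb{C}P^2$ has dimension $4$, its reduced diagonal $\Delta\colon \mathbb{C}P^2\to \mathbb{C}P^2\wedge\mathbb{C}P^2$ factors up to homotopy through the $4$-skeleton $S^2\wedge S^2 = S^4$ of the target, and the resulting map $\mathbb{C}P^2\to S^4$ is the top-cell projection $q$ (as visible from the reduced coproduct $\tilde\Delta(a_4)=a_2\otimes a_2$ in integer homology). Using that $x|_{S^2}$ is the unit, the product $x^2=\mu\circ(x\wedge x)\circ\Delta$ then equals $q^*(1)\in\tilde R^4(\mathbb{C}P^2)$, which is precisely the generator of the $R^{*-4}$ summand under the splitting. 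The relation $x^3=0$ is automatic: the iterated diagonal $\mathbb{C}P^2\to (\mathbb{C}P^2)^{\wedge 3}$ is null as a stable map because the target is $5$-connective while $\mathbb{C}P^2$ has no stable cells in dimension $\geq 6$.

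I expect the cellular analysis of $\Delta$ to be the main obstacle: the additive splitting and the vanishing of $x^3$ follow formally from characteristic $\eta$ and the dimension of $\mathbb{C}P^2$, but the multiplicative identification of $x^2$ requires the genuinely geometric input that the factorization of $\Delta$ through the $4$-skeleton of $\mathbb{C}P^2\wedge\mathbb{C}P^2$ is the top-cell projection, not merely \emph{some} map $\mathbb{C}P^2\to S^4$.
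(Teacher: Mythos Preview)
Your proposal is correct. For the mapping-space claim you and the paper both invoke the versal-algebra machinery of \cite{acb}; the paper cites Lemma~4.6 directly while you unpack its content as a space of null-homotopies of $\eta\cdot 1_R$, but the substance is the same.

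The genuine difference is in the $\mathbb{C}P^2$ computation. The paper's argument is a two-line reduction: the first part guarantees at least one $\E_1$-map $X(2)\to R$, i.e.\ an $\mathbb{A}_\infty$ complex orientation of degree~2, and then Proposition~6.5.4 of \cite{rav} is cited as a black box for the truncated-polynomial description of $R^\ast(\mathbb{C}P^2)$. Your route is more elementary and self-contained: you use only that $\eta$ acts as zero on $R_\ast$ to split the cofiber sequence for $\mathbb{C}P^2$ additively, and then identify the ring structure by analyzing the reduced diagonal $\mathbb{C}P^2\to\mathbb{C}P^2\wedge\mathbb{C}P^2$ through its factorization over the $4$-skeleton $S^4$. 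This avoids the external reference and makes explicit exactly what Ravenel's proposition packages, at the price of the cellular diagonal computation you flag as the main obstacle. The paper's route, by contrast, is shorter and keeps the emphasis on $X(2)$ as the universal source of the orientation class~$x$, which is the narrative thread of the note.
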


\begin{proof}
The first statement follows immediately from Lemma 4.6 of \cite{acb} and the ``versality" of $X(2)$. Hence there is at least one $\E_1$-morphism from $X(2)$ to $R$. From Proposition 6.5.4 of \cite{rav} we obtain the second part of the corollary. 
\end{proof}

\begin{thm}
The topological Hochschild homology of $X(n)$ as an $\E_2$-ring spectrum, denoted here by $THH(X(n))$, is equivalent to $X(n)\wedge SU(n)_+$.  
\end{thm}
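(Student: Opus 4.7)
The plan is to invoke the Blumberg--Cohen--Schlichtkrull theorem on topological Hochschild homology of Thom spectra. For a morphism $f\colon A\to BGL_1(\sph)$ of grouplike $\E_n$-spaces with $n\geq 2$, their result (or its natural $\E_2$-extension) gives an equivalence
$$THH(Mf)\simeq Mf\wedge BA_+,$$
where $BA$ is the classifying space of the underlying grouplike monoid of $A$.

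First I would check that $X(n)$ fits this framework. This is essentially immediate from the proof of Theorem 1: the Thom datum $\Omega SU(n)\hookrightarrow BU\to BGL_1(\sph)$ was noted there to be a two-fold loop map, and $\Omega SU(n)\simeq \Omega^2 BSU(n)$ is a grouplike $\E_2$-space, so all the hypotheses of the theorem are met with $A=\Omega SU(n)$.

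Next I would identify the delooping $B\Omega SU(n)$. Since $SU(n)$ is connected, the canonical comparison map gives an equivalence $B\Omega SU(n)\simeq SU(n)$. Substituting into the displayed formula then yields
$$THH(X(n))\simeq X(n)\wedge B\Omega SU(n)_+\simeq X(n)\wedge SU(n)_+,$$
as required.

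The main point of substance, and the place where the $\E_2$-structure is used essentially, lies inside the Blumberg--Cohen--Schlichtkrull formula itself: the reason the answer is $SU(n)_+$ rather than a free loop space $LSU(n)_+$ is that the cyclic bar construction on $\Omega SU(n)$, regarded as an $\E_1$-space over $BGL_1(\sph)$, splits (as a space over $BGL_1(\sph)$) as $\Omega SU(n)\times SU(n)$ with the $SU(n)$-factor mapping nullhomotopically to $BGL_1(\sph)$. This Eckmann--Hilton-type trivialization of the conjugation action is exactly what an $\E_2$-enhancement of the Thom map supplies, and it is the one technical ingredient whose verification I would treat with the greatest care.
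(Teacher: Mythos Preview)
Your proposal has a genuine gap in its use of the Blumberg--Cohen--Schlichtkrull theorem. The formula $THH(Mf)\simeq Mf\wedge BA_+$ that you quote is what BCS proves for \emph{three}-fold loop maps, not two-fold loop maps. For an $\E_2$-map $f\colon A\to BGL_1(\sph)$ their result (the paper's ``Theorem~2'') gives instead
$$THH(Mf)\simeq Mf\wedge M(\eta\circ Bf),$$
where $\eta\circ Bf$ is the composite $BA\xrightarrow{Bf} B^2GL_1(\sph)\xrightarrow{\eta} BGL_1(\sph)$ and $M(\eta\circ Bf)$ is its Thom spectrum. The $\E_2$-structure does give the product decomposition of the cyclic bar construction as a \emph{space}, exactly as you say in your last paragraph, but it does \emph{not} force the $BA$-factor to map nullhomotopically to $BGL_1(\sph)$: that map is precisely $\eta\circ Bf$, and there is no general reason for it to be null. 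Since the map $\Omega SU(n)\to BGL_1(\sph)$ is not known to deloop beyond $\E_2$ for finite $n$, you cannot bypass this by invoking the $\E_3$ version of BCS either.

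The missing ingredient is exactly Theorem~1 of the paper. Because $X(n)$ has characteristic $\eta$, the composite $B^2GL_1(\sph)\xrightarrow{\eta} BGL_1(\sph)\to BGL_1(X(n))$ is nullhomotopic, so the Thom spectrum $M(\eta\circ Bi)$ is $X(n)$-oriented. A Thom isomorphism then gives $X(n)\wedge M(\eta\circ Bi)\simeq X(n)\wedge SU(n)_+$. In other words, the twist is not killed over $\sph$ by the $\E_2$-structure; it is killed over $X(n)$ by the characteristic-$\eta$ fact. This is the step the paper supplies and your argument omits.
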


\begin{proof}
Here we use \cite{bcs} is a crucial way. In particular, we recall Theorem 2 of that paper which gives $THH(X(n))=X(n)\wedge M(\eta\circ Bi)$, where $\eta\circ Bi$ here refers to the morphism $$B\Omega SU(n)\simeq SU(n)\overset{Bi}\to B^2GL_1(\sph)\overset{\eta}\to BGL_1(\sph).$$ Since $X(n)$ is of characteristic $\eta$, we have that the composition $B^2GL_1(\sph)\overset{\eta}\to BGL_1(\sph)\to BGL_1(X(n))$ is nullhomotopic, where $BGL_1(\sph)\to BGL_1(X(n))$ is just $BGL_1(-)$ of the unit map of $X(n)$. This implies that $M(\eta\circ Bi)$ is $X(n)$-oriented, so by the associated Thom isomorphism we have $X(n)\wedge M(\eta\circ Bi)\simeq X(n)\wedge SU(n)_+$. 
\end{proof}

\begin{rem}
By a similar argument, given any Thom spectrum $Mf$, for $f\colon X\to BGL_1(\sph)$ a map of double loop spaces, such that the unit map $\sph\to Mf$ factors (as maps of $\E_2$-rings) $\sph\to X(2)\to Mf$, we have that $THH(Mf)\simeq Mf\wedge \Omega X_+$. 
\end{rem}

\begin{conj}
Recall that the morphism of $\E_2$-ring spectra $X(n)\to X(n+1)$ is a Hopf-Galois extension with associated spectral Hopf-algebra $\Omega S^{2n+1}$, thought of as the base space of the fibration $\Omega SU(n)\to \Omega SU(n+1)\to \Omega S^{2n+1}$. Then the above results, as well as the results of \cite{bcs} suggest that one might have relative $THH$ spectra: $$THH_{X(n)}(X(n+1))\simeq X(n+1)\wedge S^{2n+1}_+.$$ 
\end{conj}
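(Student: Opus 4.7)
My plan is to develop a relative version of the Blumberg--Cohen--Schlichtkrull theorem used in the preceding proof, then invoke the characteristic $\eta$ property of $X(n+1)$. The desired relative statement, for a factorization $Y\to Z\to BGL_1(\sph)$ of maps of double loop spaces with Thom spectra $Mg$ and $Mf$, would read $THH_{Mg}(Mf)\simeq Mf\wedge M\phi$, where $\phi\colon B(Z/Y)\to BGL_1(\sph)$ is obtained by taking the bar construction of $f$ relative to $g$ and composing with $\eta\colon B^2GL_1(\sph)\to BGL_1(\sph)$. Granting such a theorem and applying it to the fibration $\Omega SU(n)\to \Omega SU(n+1)\to \Omega S^{2n+1}$, whose $\E_1$-quotient is $\Omega S^{2n+1}$ with bar construction $B\Omega S^{2n+1}\simeq S^{2n+1}$, would yield $THH_{X(n)}(X(n+1))\simeq X(n+1)\wedge M\phi$ for some $\phi\colon S^{2n+1}\to BGL_1(\sph)$ that factors through $\eta$. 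As in the proof of Theorem 2, the characteristic $\eta$ property of $X(n+1)$ then forces $M\phi$ to be $X(n+1)$-orientable, and the Thom isomorphism delivers $X(n+1)\wedge M\phi\simeq X(n+1)\wedge S^{2n+1}_+$.

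A parallel route proceeds by Hopf--Galois descent. The stated Hopf--Galois equivalence $X(n+1)\wedge_{X(n)}X(n+1)\simeq X(n+1)\wedge \Omega S^{2n+1}_+$ together with the $\E_2$-identification $X(n+1)^{op}\simeq X(n+1)$ should furnish an equivalence
\[
THH_{X(n)}(X(n+1))\simeq X(n+1)\wedge_{X(n+1)\wedge \Omega S^{2n+1}_+}X(n+1)\simeq X(n+1)\wedge\bigl(\sph\wedge_{\Omega S^{2n+1}_+}\sph\bigr).
\]
A standard bar-construction calculation identifies the right-hand factor with $\Sigma^\infty_+B\Omega S^{2n+1}\simeq S^{2n+1}_+$, again yielding $X(n+1)\wedge S^{2n+1}_+$.

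The main obstacle in either approach is establishing the key spectral identification with sufficient homotopy coherence. In the first route, one must extend the Blumberg--Cohen--Schlichtkrull machinery to the relative setting; this requires a careful analysis of the cyclic bar construction of a Thom spectrum over a sub-Thom-spectrum, and is likely the heart of the matter. In the second, the subtlety is checking that the Hopf--Galois equivalence respects the bimodule structures used in forming $THH$, and that the base change is valid at the level of $\E_1$-$\sph$-algebras. Both approaches ultimately rest on upgrading the fibration $\Omega SU(n)\to \Omega SU(n+1)\to \Omega S^{2n+1}$ to a principal $\E_2$-fibration whose Thomification is compatible with forming $THH$.
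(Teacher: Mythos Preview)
The statement you are attempting to prove is labeled a \emph{Conjecture} in the paper, and the paper offers no proof of it whatsoever; it is presented as a plausible expectation suggested by the preceding results and by \cite{bcs}. There is therefore no proof in the paper for your proposal to be compared against.

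As for your proposal on its own terms: both routes you sketch are reasonable heuristics and are exactly the kind of evidence that motivates stating the conjecture, but neither constitutes a proof, and you yourself identify the genuine gaps. In the first route, the entire content of the argument rests on a relative Blumberg--Cohen--Schlichtkrull theorem that does not exist in the literature and whose formulation (let alone proof) is nontrivial: one needs the cyclic bar construction of $Mf$ over $Mg$ to Thomify a suitable map out of $B(Z/Y)$, and making this precise at the $\E_2$ level is the whole problem. In the second route, the step
\[
X(n+1)\wedge_{X(n+1)\wedge \Omega S^{2n+1}_+} X(n+1)\;\simeq\; X(n+1)\wedge\bigl(\sph\wedge_{\Omega S^{2n+1}_+}\sph\bigr)
\]
presupposes that the $X(n+1)$-bimodule structure on $X(n+1)\wedge \Omega S^{2n+1}_+$ is the extended one, which is exactly the coherence issue you flag; without that, the base-change identification is unjustified. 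So your write-up is an honest outline of why the conjecture is plausible and where the work lies, but it is not a proof---which is consistent with the paper's own treatment of the statement as open.
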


\bibliographystyle{amsalpha}
\bibliography{references}

\end{document}